\documentclass{amsart}
\usepackage[a4paper,margin=45mm]{geometry}
\usepackage{amsmath,amssymb,amsthm,mathtools}
\usepackage{microtype}
\usepackage{hyperref}
\usepackage{amsrefs}

\newtheorem{theorem}{Theorem}
\newtheorem{lemma}[theorem]{Lemma}
\theoremstyle{remark}

\newcommand{\F}{\mathbb F}
\newcommand{\Z}{\mathbb Z}
\allowdisplaybreaks

\title{On symmetric and twisted-unitary modular units in group rings}
\author{Laurent Bartholdi}
\address{LB: Institut Camille Jordan, Université de Lyon 1 \textnormal{\itshape and} Section de Mathématiques, Université de Genève}
\email{laurent.bartholdi@gmail.com}
\author{Iuliya Beloshapka}
\address{IB: Google Deepmind}
\email{ibeloshapka@google.com}
\author{Roman Mikhailov}
\address{RM: St Petersburg University}
\date{September 24, 2026}

\begin{document}
\maketitle

\begin{abstract}
We study units in group rings of the Hantzsche-Wendt group. We construct a compatible system of non-trivial unitary units over $\Z/2^n\Z$ for every $n\ge1$, and ``almost-group'' units over $\Z/2\Z$, meaning non-trivial units $u$ with the property that $u-g=u^{-1}-g^{-1}$ for some group element $g$.
\end{abstract}

\section{Introduction}
Let $K$ be a field and let $G$ be a torsion-free group. Kaplansky stated in~\cite{kaplansky:problemrings} a number of problems on the group ring $KG$, including one that became the famous conjecture that $KG$ has no zero divisors. In a later note~\cite{kaplansky:problemrings2} he mentions having heard from Bergman about a symposium in Kishinev in which it was asked whether the only units in $KG$ are scalar multiples of group elements. Even though the study of units in group rings dates at least to Higman's thesis~\cite{higman:units}, this last question became known as ``Kaplansky's unit conjecture''. Higman already noted that it holds for indicable groups, namely those in which every nontrivial subgroup admits an epimorphism to $\Z$.

More generally, groups with the unique product property satisfy this conjecture~\cite{passman:gr}*{CHECK!}; so a refutation should start with a group without the unique product property. Promislow~\cite{promislow:nup} found such an example in the Hantzsche-Wendt group, the smallest crystallographic group with finite abelianization and trivial centre:
The group used throughout the paper is
\begin{equation}\label{eq:G}
G=\langle a,b\mid a^{-1}b^2a=b^{-2},\; b^{-1}a^2b=a^{-2}\rangle .
\end{equation}
It embeds in $D_\infty^3=\langle r,s\mid s^2=(rs)^2=1\rangle^3$ by
\[
  a=(r,s,s),\qquad b=(s,r,rs),
\]
and contains
\[
  \langle x=a^2, y=b^2, z=(ab)^2\rangle\cong\Z^2
\]
as a normal subgroup of index $4$.

A computer search allowed Gardam to construct a nontrivial unit in $\F_2 G$, supported on $21$ elements~\cite{gardam:units}. He later improved this to a nontrivial unit in $\Z[\zeta_8]G$, or more generally any ring containing an eightth root of unity~\cite{gardam:unitsc}. One tool that allows the search space to be restricted to a manageable size is to require \emph{more} from the unit: that it be invariant under an automorphism of $G$; or that it be \emph{unitary}, namely be invariant under the antiinvolution of $K G$ induced by $g\mapsto g^{-1}$. A weaker condition is the following: for an automorphism $\sigma\colon G\to G$ and a character $\chi\colon G\to K^\times$, a unit $u\in(K G)^\times$ is called \emph{$(\sigma,\chi)$-unitary} if
\[u=\sum u_g g,u^{-1}=\sum u_g\chi(g)\sigma(g^{-1}).\]
The units found by Gardam are all, up to translation, weakly unitary in this sense~\cite{bartholdi:units}. This reduction of the search space is all the more important since computer search (ultimately relying on SAT solving) tends to blow up exponentially.

There remain important open questions in the topic of units in $K G$. The first is whether there exists a torsion-free group with non-trivial units in $\Z G$; this is at present open, but there are units in $(\Z/p)$ for all $p$ prime~\cite{murray:units}, and we have the following result:
\begin{theorem}[Unitary units modulo all powers of $2$]\label{thm:main-2adic}
  Define the automorphism $\sigma$ of $G$ and character $\chi\colon G\to\{\pm1\}$ by
  \begin{equation}\label{eq:sigmachi}
    \sigma(a)=a,\qquad\sigma(b)=b^{-1},\qquad\chi(a)=\chi(b)=-1.
  \end{equation}
  Then there is, for all $n\ge1$, a nontrivial $(\sigma,\chi)$-unitary unit $u_n$ in $(\Z/2^n\Z)G$ such that $u_n\equiv u_{n+1}\pmod{2^n}$.

  In other words, there is a non-trivial ($\sigma,\chi)$-unitary unit $u$ in the completion $\lim(\Z/2^n\Z)G$.

  Furthermore, the unit $u_1$ is almost symmetric in the following sense: for some $g\in G$ one has
  \[u_1 + g = u_1^{-1} + g^{-1}.\]
\end{theorem}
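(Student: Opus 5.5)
We construct the units explicitly and lift them $2$-adically one power at a time, with the unitarity condition built into each lift. Let $\iota$ be the anti-involution of $RG$ ($R=\Z/2^n\Z$) given by $\iota(\sum u_g g)=\sum u_g\chi(g)\sigma(g^{-1})$. One checks that $\iota$ is a ring anti-automorphism: $\chi$ is a character with values in $\{\pm1\}$, and $\sigma$ is an automorphism, since $b\mapsto b^{-1}$ preserves both relations of \eqref{eq:G}. Moreover $\iota^2=\mathrm{id}$, because $\chi\circ\sigma=\chi$ and $\sigma^2=\mathrm{id}$. A unit $u$ is $(\sigma,\chi)$-unitary exactly when $u\,\iota(u)=1$. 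Modulo $2$ the character is trivial, so $\iota$ reduces to $g\mapsto\sigma(g)^{-1}$.

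\textbf{Step 1 (base case).} We would produce $u_1\in\F_2G$ with $u_1\iota(u_1)=1$, together with some $g$ satisfying $u_1+g=u_1^{-1}+g^{-1}$. Following Gardam's construction and~\cite{bartholdi:units}, write $u_1=\sum_{i,j} p_{ij}\,t_{ij}$ with $p_{ij}\in\F_2[x^{\pm1},y^{\pm1},z^{\pm1}]$ and $t_{ij}$ coset representatives of the index-$4$ subgroup $\Z^2$. The unit equation then becomes a system of polynomial equations on the Laurent polynomials $p_{ij}$. Imposing $\iota$-invariance together with the almost-symmetry condition reduces the support, so that a SAT search over small supports becomes feasible. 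Verification is a finite computation: multiply out in the $4\times4$ matrix representation over the Laurent ring.

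\textbf{Step 2 (Hensel-type lifting).} Suppose $u_n$ is unitary modulo $2^n$, and lift it arbitrarily (say with coefficients in $\{0,\dots,2^n-1\}$) to $\tilde u$. Set $e=\tilde u\,\iota(\tilde u)-1$. Then $e\equiv0\pmod{2^n}$, and $\iota(e)=e$, since $\iota$ is an anti-involution. Write $e=2^n\varepsilon$ with $\varepsilon\in\F_2G$ $\iota$-symmetric. We seek $u_{n+1}=\tilde u(1+2^n\delta)$ with $\delta\in\F_2G$. Then
\[
u_{n+1}\,\iota(u_{n+1})\equiv 1+2^n\bigl(\varepsilon+\tilde u(\delta+\iota(\delta))\iota(\tilde u)\bigr)\pmod{2^{n+1}},
\]
provided $n\ge1$; for $n=1$ the cross term $2^{2n}\delta\iota(\delta)$ vanishes modulo $4$ only when $n\ge1$, and this holds. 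Since $\tilde u\iota(\tilde u)\equiv1$, we must solve
\[
\delta+\iota(\delta)=\tilde u^{-1}\varepsilon\,\iota(\tilde u)^{-1}\eqqcolon\eta
\]
in $\F_2G$, where $\eta$ is $\iota$-symmetric. Over $\F_2$, the symmetric elements of the form $\delta+\iota(\delta)$ are exactly the symmetric elements with zero coefficient on every $\iota$-fixed basis element. Here the basis elements are $g$ and $\chi(g)\sigma(g)^{-1}$, which agree modulo $2$.

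\textbf{Step 3 (the obstruction).} The main difficulty is showing that $\eta$ has no coefficient on the fixed elements $g=\sigma(g)^{-1}$. The fix is to track more information than unitarity modulo $2^n$. Working in $\Z/2^{n+1}$, the coefficient of $e$ at a fixed $g$ with $\chi(g)=1$ is a sum of pairs $u_hu_k+u_ku_h$ plus diagonal terms $u_h^2$. It is therefore computable modulo $2^{n+1}$ in terms of $\sum u_h$ over $h$ with $h\,\sigma(h)^{-1}$-type conditions, much like the classical "quadratic form" obstruction. If $\chi(g)=-1$, the coefficient is automatically even in a way that kills it. We would show that this obstruction is a homomorphism-like invariant, a trace $\tau_\chi(u)$ into $\F_2$ of the $\iota$-fixed part. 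We then check that it vanishes for the specific $u_1$, either by direct computation or because our chosen $\chi$ takes the value $-1$ on the relevant fixed points (the involutions in $\sigma$-twisted inversion, i.e. the elements $h$ with $\sigma(h)=h^{-1}$). If the obstruction does not vanish automatically, we would strengthen the induction hypothesis to carry an auxiliary condition modulo $2^{n+1}$ preserved by the lift. The compatibility $u_{n+1}\equiv u_n$ holds by construction, nontriviality is inherited from $u_1$, and passing to the inverse limit gives $u$.
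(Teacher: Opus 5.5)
Your Step 3 is where the whole difficulty lies, and it is left unresolved. Worse, the obstruction is genuine and depends on the particular $u_n$ you chose, not only on $u_1$.

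Put $\tau(g)=\sigma(g)^{-1}$. At a $\tau$-fixed $g$ with $\chi(g)=1$, the coefficient of $u\iota(u)$ at $g$ is $\sum_{h\tau(h)=g}\chi(h)u_h^2+2\sum_{\{h,k\}}\chi(k)u_hu_k$, summed over unordered pairs $h\neq k$ with $h\tau(k)=g$. For $n\ge1$ this is determined modulo $2^{n+1}$ by $u\bmod 2^n$. Hence no choice of $\delta$ can change it: $u_n$ lifts to level $2^{n+1}$ if and only if these coefficients are already $1$ at $g=1$ and $0$ elsewhere modulo $2^{n+1}$.

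This condition really can fail:
\begin{itemize}
\item The trivial element $u_1=a\in\F_2G$ satisfies $u_1\iota(u_1)=1$. But every lift $a+2w$ has coefficient $\chi(a)=-1\not\equiv1\pmod 4$ at $1$ in $u\iota(u)$. So $\chi=-1$ on the relevant $h$ (here those with $\sigma(h)=h$) creates the obstruction rather than killing it.
\item $3\in(\Z/8\Z)G$ has $3\iota(3)\equiv1\pmod8$, while every $u_4\equiv3\pmod8$ gives coefficient $9$ at $1$ modulo $16$. This happens even though $3\equiv-1\pmod4$ and $-1$ is unitary.
\end{itemize}
So a search over $\F_2G$ cannot guarantee liftability. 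An induction insisting on $u_{n+1}\equiv u_n\pmod{2^n}$ with arbitrary choices can get stuck at any level, and the ``auxiliary condition'' you allude to is never specified.

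The missing ingredient is the paper's. First, exhibit and verify a solution $r$ modulo $8$; its reduction modulo $2$ is the $43$-term $u_1$. Then correct $r$ in one stroke by the inverse square root of $H=\Theta(r)r=1+8E$ (your $\iota$ is the paper's $\Theta$), namely $S=\sum_{m\ge0}(-1)^m2^m\binom{2m}{m}E^m$. This converges because $v_2\bigl(2^m\binom{2m}{m}\bigr)\ge m$. That is exactly where the modulus $8$ is needed: for $H=1+4E$ the coefficients would be $\pm\binom{2m}{m}$, which do not tend to $0$ $2$-adically. Since $S$ is a power series in the single $\Theta$-symmetric element $E$, it satisfies $\Theta(S)=S$ and $SHS=1$. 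So $u=rS$ is unitary in $\widehat R$, and the compatible system is $u_n=u\bmod 2^n$.

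Note that $u\equiv r(1-4E)\pmod 8$, so the lower digits of $r$ get revised. That is precisely the freedom your strict compatibility scheme forbids. If you prefer a stepwise argument, the Newton step $u\mapsto(1-2^{n-1}E)u$ for $u\iota(u)=1+2^nE$ works for every $n\ge3$, but it preserves $u$ only modulo $2^{n-1}$.
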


Already obtaining a solution modulo $4$ was an open problem listed in~\cite{bartholdi:units}. We have learnt from Zane Grube and Moe Tabei that they independently obtained such solutions modulo $4$.

\subsection{Acknowledgment of AI use}
All computational work in this paper was carried out by Google DeepMind's AI Co-Mathematician~\cite{zheng+:comathematician}, which discovered the symmetric units and helped prove the results.

\section{Proof of Theorem~\ref{thm:main-2adic}}
In the group $G$ from~\eqref{eq:G}, we use the normal form
\begin{equation}\label{eq:nf}
  g=x^i y^j z^k q_s,
  \qquad q_0=1,\quad q_1=a,\quad q_2=b,\quad q_3=ab,
\end{equation}
so group elements will also be written as quadruples $(i,j,k,s)$.

Set $R=\Z G$ and let
\[\widehat R=\lim(\Z/2^n\Z)G\]
be its $2$-adic completion. (Beware that this is \emph{not} the group ring $\Z_2 G$ over the $2$-adics). For the automorphism $\sigma$ and character $\chi$ from~\eqref{eq:sigmachi}, let $\Theta$ be the anti-involution of $R G$ and $\widehat G$ induced by $g\mapsto\chi(g)\sigma(g^{-1})$.

A computer search, whose output is listed in Appendix~\ref{app:317}, produced a unit $r$ in $(\Z/8\Z)G$. It is a sum of $317$ elements, and direct multiplication, in the normal form~\eqref{eq:nf}, gives
\[
  \Theta(r)r\equiv1\pmod8.
\]
The first claim of the proof is finished by the following lifting result. Indeed, given $u\in\widehat R$ with $\Theta(u)u=u\Theta(u)=1$ we define $u_n$ as the reduction modulo $2^n$ of $u$.

\begin{lemma}\label{lem:2adic-square-root}
  Let $r\in R$ satisfy $\Theta(r)r\equiv1\pmod8$. Put
  \[
    H=\Theta(r)r,\qquad E=(H-1)/8\in R.
  \]
  Define
  \[
    S=(1+8E)^{-1/2}=\sum_{m\ge0}(-1)^m2^m\binom{2m}{m}E^m\in\widehat R,
    \qquad
    u=rS.
  \]
  Then $\Theta(u)u=\Theta(u)u=1$ so $u$ is a twisted unitary unit in $\widehat R$, and $u\equiv r\pmod2$.
\end{lemma}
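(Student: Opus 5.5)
The plan is to verify three things in turn: that the series defining $S$ converges in $\widehat R$ and satisfies $S^2(1+8E)=1$; that $\Theta$ fixes $H$, $E$ and $S$; and finally that $u$ is a two-sided twisted-unitary unit.

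\emph{Convergence and the square-root identity.} The coefficients are those of the binomial series $(1+t)^{-1/2}=\sum_m(-1)^m\binom{2m}{m}4^{-m}t^m$ evaluated at $t=8E$. This gives the integer coefficients $(-1)^m2^m\binom{2m}{m}$, so the $m$-th term lies in $2^mR$ and the partial sums form a Cauchy sequence in $\widehat R=\lim(\Z/2^n\Z)G$. All partial sums are polynomials in the single element $E$, so they commute with $E$ and with each other. The formal identity $\bigl(\sum_m c_mt^m\bigr)^2(1+t)=1$ in $\Z[[t]]$ with $t=8E$ then holds modulo every $2^n$, because only finitely many terms matter modulo $2^n$. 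Hence $S^2H=HS^2=1$ in $\widehat R$, and $S$ commutes with $H$. Also $S\equiv1\pmod 2$, which gives $u=rS\equiv r\pmod2$.

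\emph{Symmetry.} $\Theta$ is an anti-involution induced by $g\mapsto\chi(g)\sigma(g^{-1})$, so $\Theta(H)=\Theta(r)\Theta(\Theta(r))=\Theta(r)r=H$. Therefore $\Theta(E)=E$ and $\Theta(E^m)=E^m$. Since $\Theta$ preserves $2^nR$, it passes to $\widehat R$ and is continuous there, so $\Theta(S)=S$. Consequently
\[\Theta(u)u=\Theta(S)\Theta(r)rS=SHS=HS^2=1.\]

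\emph{Two-sidedness.} This is the main obstacle: the computation above only shows that $\Theta(u)$ is a left inverse of $u$. I would reduce to direct finiteness of $\F_2G$. The group $G$ is virtually $\Z^3$; more precisely, $\langle x,y,z\rangle$ is free abelian of finite index. So $\F_2G$ is a finitely generated module over the commutative Noetherian ring $\F_2[x^{\pm1},y^{\pm1},z^{\pm1}]$, hence Noetherian, hence directly finite. Reducing $\Theta(u)u=1$ modulo $2$ therefore gives $u\Theta(u)\equiv1\pmod 2$.

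Set $e=u\Theta(u)=1+2c$. Then $e$ is invertible in $\widehat R$, by the geometric series in $2c$, which converges $2$-adically. Moreover $e^2=u(\Theta(u)u)\Theta(u)=e$, so $e(e-1)=0$, and invertibility forces $e=1$. Thus $u\Theta(u)=\Theta(u)u=1$, and $u$ is a twisted-unitary unit with $u\equiv r\pmod 2$.
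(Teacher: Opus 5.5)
Your proof is correct, and the convergence and symmetry steps match the paper's. One small imprecision: the identity $\bigl(\sum_m c_m t^m\bigr)^2(1+t)=1$ does not hold in $\Z[[t]]$ as written, since $\binom{2m}{m}4^{-m}$ is not an integer. It becomes an integral identity in $\Z[[s]]$ after substituting $t=8s$, which is what you actually use.

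The two-sidedness step is where you take a different route, in two ways.

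\emph{Direct finiteness of $\F_2G$.} The paper gets this by citing the stable finiteness theorem for residually amenable groups. You instead observe that $\F_2G$ is a finitely generated module over $\F_2[x^{\pm1},y^{\pm1},z^{\pm1}]$, hence left Noetherian, hence directly finite. You are right that this lattice is $\Z^3$; the paper's ``$\Z^2$'' is a typo.

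\emph{The lifting mechanism.} The paper picks a lift $v\in\widehat R$ of $\bar r^{-1}$ and writes $uv=1+X$ with $X\in2\widehat R$. It then takes $v(1+X)^{-1}$ as a right inverse of $u$, which must coincide with the left inverse $\Theta(u)$. You avoid choosing a lift. You note that $e=u\Theta(u)$ is an idempotent congruent to $1$ modulo $2$, hence invertible, hence equal to $1$.

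Your version is self-contained and elementary, needs only that $G$ is virtually abelian, and gives $u\Theta(u)=1$ directly. The paper's citation applies to every residually amenable group. Your Noetherian argument, by contrast, is limited to groups whose group algebra is Noetherian, such as polycyclic-by-finite groups. Once direct finiteness modulo $2$ is available, either lifting mechanism works equally well.
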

\begin{proof}
The congruence assumption gives $H=1+8E$.  The binomial identity
\[
  (1+X)^{-1/2}=\sum_{m\ge0}(-1)^m\binom{2m}{m}\frac{X^m}{4^m}
\]
with $X=8E$ gives the displayed expression for $S$.  The coefficient of $E^m$ is
$(-1)^m2^m\binom{2m}{m}$, which tends to zero 2-adically, so the series converges in
$\widehat R$.

Since $\Theta(H)=H$, we have $\Theta(E)=E$ and hence $\Theta(S)=S$.  The formal identity
$(1+T)^{-1/2}(1+T)(1+T)^{-1/2}=1$ may be evaluated at $T=8E$, because only powers of the single
element $E$ occur, so $SHS=1$. Therefore
\[
        \Theta(u)u=\Theta(rS)rS=S\Theta(r)rS=SHS=1.
\]

It remains to see that $u$ is a two-sided unit. Consider the reduction $\bar r$ of $r$ modulo $2$. It is a two-sided unit in $\F_2 G$ by~\cite{aop:stablefiniteness}, since $G$ is residually amenable. Chose therefore any lift $v\in\widehat R$ of $\bar r^{-1}$. We have
\[
  uv=1+X
\]
with $X\in2\widehat R$.  The geometric series for $(1+X)^{-1}$ converges in $\widehat R$, so $uv(1+X)^{-1}=1$ and $u$ is a two-sided unit; naturally $v(1+X)^{-1}=\Theta(u)$ after the fact.
\end{proof}

For the second claim $u_1 + z = u_1^{-1} + z^{-1}$, this is again a simple matter of checking; one has $g=(abab)^{-1}$, and the $43$ terms of $u_1$ are listed in Appendix~\ref{app:43}.

Note that this element $u_1$ is dangerously close to being a zero divisor. Indeed $u=u^{-1}$ is impossible in $\F_2[G]$ without causing the existence of zero divisors, and $u^{-1}=u+g+g^{-1}$ is as close as can be in a group that satisfies the zero divisor conjecture. We hope that it may be a building lock for zero divisors in larger groups.

\appendix

\section{Support of the 43-term almost symmetric unit}
\label{app:43}

Recall that elements of $G$ are given in their normal form: $(i,j,k,s)=x^iy^jz^kq_s$, with $q_0=1$, $q_1=a$, $q_2=b$, and $q_3=ab$.

The following is a list of all elements of the support of $u_1$. One obtains $u_1^{-1}$ from $u_1$ by replacing $(0,0,-1,0)=z^{-1}$ by $(0,0,1,0)=z$.

\begin{center}
  The support of $u_1$
\footnotesize
\[\begin{array}{rl@{\qquad}rl@{\qquad}rl}
  b^{-1}& (0,-1,0,2) & 
  ab& (0,0,0,3)&
  ab^{-1}& (0,1,0,3)\\
  a^{-1}b& (-1,0,0,3)&
  a^{-1}b^{-1}& (-1,1,0,3)&
  ba& (-1,1,-1,3)\\
  ba^{-1}& (0,1,-1,3)& 
  b^{-1}a& (-1,0,-1,3)& 
  b^{-1}a^{-1}& (0,0,-1,3)\\
  aba^{-1}& (1,-1,1,2)& 
  ab^{2}& (0,-1,0,1)& 
  ab^{-1}a& (0,0,1,2)\\
  ab^{-1}a^{-1}& (1,0,1,2)& 
  a^{-2}b^{-1}& (-1,-1,0,2)& 
  a^{-1}b^{-2}& (-1,1,0,1)\\
  a^{4}& (2,0,0,0)& 
  a^{3}b& (1,0,0,3)& 
  a^{3}b^{-1}& (1,1,0,3)\\
  a^{2}ba^{-1}& (1,1,-1,3)& 
  a^{2}b^{2}& (1,1,0,0)& 
  a^{2}b^{-1}a^{-1}& (1,0,-1,3)\\
  a^{2}b^{-2}& (1,-1,0,0)& 
  aba^{-1}b& (1,0,1,0)& 
  a^{-4}& (-2,0,0,0)\\
  a^{-2}b^{2}& (-1,1,0,0)& 
  a^{-2}b^{-2}& (-1,-1,0,0)& 
  a^{-1}bab& (-1,0,1,0)\\
  baba& (0,0,-1,0)& 
  baba^{-1}& (-1,0,-1,0)& 
  ba^{-1}ba& (1,0,-1,0)\\
  a^{4}b& (2,0,0,2)& 
  a^{3}ba^{-1}& (2,-1,1,2)& 
  a^{3}b^{-2}& (1,1,0,1)\\
  a^{2}ba^{-1}b& (1,0,-1,1)& 
  ababa& (0,0,1,1)& 
  ababa^{-1}& (-1,0,1,1)\\
  a^{-3}b^{2}& (-2,-1,0,1)& 
  a^{-2}bab& (-2,0,-1,1)& 
  a^{5}b& (2,0,0,3)\\
  a^{5}b^{-1}& (2,1,0,3)& 
  a^{4}ba^{-1}& (2,1,-1,3)& 
  a^{4}b^{-1}a^{-1}& (2,0,-1,3)\\
  && a^{6}b& (3,0,0,2)& 
\end{array}\]
\end{center}

\section{The modulo 8 representative for the 2-adic lift}\label{app:317}
The following list gives the non-zero terms of the representative $r$ modulo $8$. Each term is written as a coefficient in $\{0,\ldots,7\}$ times a normal-form coordinate $(i,j,k,s)$.

\begin{center}
  The unit $r$ modulo $8$
  \footnotesize
  \begin{alignat*}{4}
    \text{coeff } & \text{coordinate} &
    \text{coeff } & \text{coordinate} &
    \text{coeff } & \text{coordinate} &
    \text{coeff } & \text{coordinate}\\
  4\;& (-4,-1,-1,1)&
  4\;& (-4,0,0,1)&
  4\;& (-3,-2,-2,1)&
  4\;& (-3,-2,-1,1)\\
  4\;& (-3,-1,-1,0)&
  4\;& (-3,-1,0,0)&
  4\;& (-3,-1,1,1)&
  6\;& (-3,0,0,0)\\
  4\;& (-3,0,0,3)&
  4\;& (-3,1,-1,3)&
  4\;& (-3,1,1,1)&
  4\;& (-2,-3,0,1)\\
  4\;& (-2,-3,1,1)&
  4\;& (-2,-2,-2,0)&
  4\;& (-2,-2,-2,1)&
  4\;& (-2,-2,-1,1)\\
  4\;& (-2,-2,0,1)&
  4\;& (-2,-2,1,0)&
  4\;& (-2,-2,1,1)&
  4\;& (-2,-2,1,2)\\
  4\;& (-2,-1,-2,0)&
  4\;& (-2,-1,-2,1)&
  4\;& (-2,-1,-2,3)&
  4\;& (-2,-1,-1,1)\\
  4\;& (-2,-1,-1,2)&
  4\;& (-2,-1,0,0)&
  5\;& (-2,-1,0,1)&
  2\;& (-2,-1,0,2)\\
  4\;& (-2,-1,0,3)&
  6\;& (-2,-1,1,0)&
  3\;& (-2,0,-1,1)&
  4\;& (-2,0,-1,2)\\
  6\;& (-2,0,-1,3)&
  7\;& (-2,0,0,0)&
  6\;& (-2,0,0,1)&
  4\;& (-2,0,0,2)\\
  4\;& (-2,0,1,0)&
  2\;& (-2,0,1,1)&
  4\;& (-2,0,1,3)&
  4\;& (-2,0,2,0)\\
  4\;& (-2,0,2,1)&
  4\;& (-2,1,-2,1)&
  2\;& (-2,1,-1,0)&
  4\;& (-2,1,-1,3)\\
  6\;& (-2,1,0,1)&
  6\;& (-2,1,0,3)&
  2\;& (-2,1,1,0)&
  2\;& (-2,1,1,1)\\
  4\;& (-2,1,1,3)&
  4\;& (-2,1,2,0)&
  4\;& (-2,2,0,0)&
  4\;& (-2,2,0,1)\\
  4\;& (-2,2,0,3)&
  4\;& (-2,2,1,0)&
  4\;& (-2,2,1,3)&
  4\;& (-2,2,2,0)\\
  4\;& (-2,2,2,1)&
  4\;& (-1,-2,-2,0)&
  4\;& (-1,-2,-1,3)&
  4\;& (-1,-2,1,0)\\
  4\;& (-1,-2,1,1)&
  6\;& (-1,-2,1,2)&
  4\;& (-1,-2,1,3)&
  4\;& (-1,-2,2,1)\\
  4\;& (-1,-2,2,2)&
  4\;& (-1,-1,-2,0)&
  4\;& (-1,-1,-2,1)&
  4\;& (-1,-1,-1,1)\\
  2\;& (-1,-1,-1,2)&
  4\;& (-1,-1,-1,3)&
  5\;& (-1,-1,0,0)&
  4\;& (-1,-1,0,1)\\
  7\;& (-1,-1,0,2)&
  4\;& (-1,-1,1,0)&
  6\;& (-1,-1,1,1)&
  4\;& (-1,-1,2,1)\\
  4\;& (-1,-1,2,2)&
  2\;& (-1,0,-2,3)&
  1\;& (-1,0,-1,0)&
  6\;& (-1,0,-1,1)\\
  2\;& (-1,0,-1,2)&
  1\;& (-1,0,-1,3)&
  4\;& (-1,0,0,1)&
  2\;& (-1,0,0,2)\\
  1\;& (-1,0,0,3)&
  5\;& (-1,0,1,0)&
  1\;& (-1,0,1,1)&
  4\;& (-1,0,1,2)\\
  2\;& (-1,0,1,3)&
  4\;& (-1,0,2,1)&
  4\;& (-1,1,-2,1)&
  4\;& (-1,1,-2,3)\\
  4\;& (-1,1,-1,1)&
  4\;& (-1,1,-1,2)&
  7\;& (-1,1,-1,3)&
  1\;& (-1,1,0,0)\\
  5\;& (-1,1,0,1)&
  7\;& (-1,1,0,3)&
  2\;& (-1,1,1,0)&
  6\;& (-1,1,1,1)\\
  4\;& (-1,1,1,2)&
  4\;& (-1,1,2,0)&
  4\;& (-1,2,-2,0)&
  4\;& (-1,2,-2,1)\\
  4\;& (-1,2,-2,3)&
  4\;& (-1,2,-1,0)&
  4\;& (-1,2,-1,1)&
  6\;& (-1,2,-1,3)\\
  4\;& (-1,2,0,1)&
  6\;& (-1,2,0,3)&
  4\;& (-1,2,1,1)&
  4\;& (-1,2,2,0)\\
  4\;& (-1,2,2,1)&
  4\;& (-1,3,-2,3)&
  4\;& (-1,3,-1,0)&
  4\;& (-1,3,-1,1)\\
  4\;& (-1,3,-1,3)&
  4\;& (-1,3,0,0)&
  4\;& (-1,3,0,1)&
  4\;& (-1,3,0,3)\\
  4\;& (-1,3,1,0)&
  4\;& (-1,3,1,1)&
  4\;& (0,-4,0,2)&
  4\;& (0,-4,1,2)\\
  4\;& (0,-3,-1,0)&
  4\;& (0,-3,-1,1)&
  4\;& (0,-3,0,0)&
  4\;& (0,-3,0,2)\\
  4\;& (0,-3,1,0)&
  4\;& (0,-3,2,2)&
  4\;& (0,-2,-2,0)&
  4\;& (0,-2,-1,0)\\
  4\;& (0,-2,-1,2)&
  4\;& (0,-2,0,2)&
  4\;& (0,-2,0,3)&
  4\;& (0,-2,1,0)\\
  4\;& (0,-2,1,1)&
  2\;& (0,-2,1,2)&
  4\;& (0,-2,1,3)&
  4\;& (0,-2,2,0)\\
  4\;& (0,-1,-2,0)&
  2\;& (0,-1,-1,1)&
  4\;& (0,-1,-1,2)&
  2\;& (0,-1,-1,3)\\
  4\;& (0,-1,0,0)&
  5\;& (0,-1,0,1)&
  7\;& (0,-1,0,2)&
  2\;& (0,-1,0,3)\\
  4\;& (0,-1,1,0)&
  4\;& (0,-1,1,1)&
  4\;& (0,-1,2,0)&
  4\;& (0,-1,2,1)\\
  4\;& (0,-1,2,2)&
  2\;& (0,0,-2,3)&
  3\;& (0,0,-1,0)&
  6\;& (0,0,-1,1)\\
  6\;& (0,0,-1,2)&
  3\;& (0,0,-1,3)&
  2\;& (0,0,0,0)&
  4\;& (0,0,0,1)\\
  2\;& (0,0,0,2)&
  1\;& (0,0,0,3)&
  6\;& (0,0,1,0)&
  5\;& (0,0,1,1)\\
  7\;& (0,0,1,2)&
  6\;& (0,0,1,3)&
  4\;& (0,0,2,1)&
  4\;& (0,0,2,2)\\
  6\;& (0,1,-1,1)&
  4\;& (0,1,-1,2)&
  1\;& (0,1,-1,3)&
  6\;& (0,1,0,0)\\
  7\;& (0,1,0,3)&
  6\;& (0,1,1,0)&
  6\;& (0,1,1,2)&
  4\;& (0,1,1,3)\\
  4\;& (0,1,2,0)&
  4\;& (0,2,-2,3)&
  2\;& (0,2,-1,3)&
  4\;& (0,2,0,0)\\
  2\;& (0,2,0,3)&
  4\;& (0,2,1,1)&
  4\;& (0,2,1,3)&
  4\;& (0,2,2,1)\\
  4\;& (0,3,-1,1)&
  4\;& (0,3,-1,3)&
  4\;& (0,3,0,0)&
  4\;& (0,3,0,3)\\
  4\;& (0,3,1,1)&
  4\;& (1,-3,-1,2)&
  4\;& (1,-3,0,2)&
  4\;& (1,-3,1,0)\\
  4\;& (1,-2,-2,0)&
  4\;& (1,-2,-2,3)&
  4\;& (1,-2,-1,0)&
  4\;& (1,-2,0,0)\\
  6\;& (1,-2,0,2)&
  4\;& (1,-2,1,0)&
  4\;& (1,-2,1,1)&
  6\;& (1,-2,1,2)\\
  4\;& (1,-2,2,0)&
  4\;& (1,-1,-2,0)&
  6\;& (1,-1,-1,0)&
  6\;& (1,-1,-1,1)\\
  6\;& (1,-1,-1,2)&
  6\;& (1,-1,-1,3)&
  7\;& (1,-1,0,0)&
  6\;& (1,-1,0,1)\\
  4\;& (1,-1,0,2)&
  6\;& (1,-1,0,3)&
  2\;& (1,-1,1,0)&
  5\;& (1,-1,1,2)\\
  4\;& (1,-1,2,2)&
  4\;& (1,0,-2,3)&
  7\;& (1,0,-1,0)&
  7\;& (1,0,-1,1)\\
  2\;& (1,0,-1,2)&
  3\;& (1,0,-1,3)&
  6\;& (1,0,0,0)&
  6\;& (1,0,0,1)\\
  2\;& (1,0,0,2)&
  7\;& (1,0,0,3)&
  3\;& (1,0,1,0)&
  2\;& (1,0,1,1)\\
  5\;& (1,0,1,2)&
  4\;& (1,0,1,3)&
  4\;& (1,0,2,1)&
  4\;& (1,0,2,2)\\
  4\;& (1,1,-2,3)&
  4\;& (1,1,-1,0)&
  2\;& (1,1,-1,1)&
  7\;& (1,1,-1,3)\\
  7\;& (1,1,0,0)&
  5\;& (1,1,0,1)&
  2\;& (1,1,0,2)&
  7\;& (1,1,0,3)\\
  2\;& (1,1,1,0)&
  6\;& (1,1,1,1)&
  4\;& (1,1,1,2)&
  4\;& (1,1,1,3)\\
  4\;& (1,1,2,0)&
  4\;& (1,2,-2,1)&
  4\;& (1,2,-2,3)&
  4\;& (1,2,-1,1)\\
  4\;& (1,2,0,3)&
  4\;& (1,2,1,2)&
  4\;& (1,3,-2,3)&
  4\;& (1,3,-1,1)\\
  4\;& (1,3,0,0)&
  4\;& (1,3,0,2)&
  4\;& (1,3,0,3)&
  4\;& (1,3,1,0)\\
  4\;& (1,3,1,1)&
  4\;& (1,3,1,2)&
  4\;& (2,-3,-1,2)&
  4\;& (2,-2,-1,0)\\
  4\;& (2,-2,-1,2)&
  4\;& (2,-1,-2,3)&
  4\;& (2,-1,-1,0)&
  4\;& (2,-1,-1,2)\\
  4\;& (2,-1,-1,3)&
  2\;& (2,-1,0,2)&
  4\;& (2,-1,1,1)&
  3\;& (2,-1,1,2)\\
  4\;& (2,-1,1,3)&
  4\;& (2,-1,2,0)&
  4\;& (2,0,-2,3)&
  4\;& (2,0,-1,2)\\
  3\;& (2,0,-1,3)&
  7\;& (2,0,0,0)&
  3\;& (2,0,0,2)&
  3\;& (2,0,0,3)\\
  2\;& (2,0,1,0)&
  4\;& (2,0,1,1)&
  2\;& (2,0,1,2)&
  4\;& (2,0,1,3)\\
  4\;& (2,0,2,0)&
  4\;& (2,1,-2,0)&
  4\;& (2,1,-1,0)&
  4\;& (2,1,-1,1)\\
  4\;& (2,1,-1,2)&
  1\;& (2,1,-1,3)&
  6\;& (2,1,0,0)&
  4\;& (2,1,0,2)\\
  1\;& (2,1,0,3)&
  4\;& (2,1,2,2)&
  4\;& (2,2,-1,0)&
  4\;& (2,2,-1,2)\\
  4\;& (2,2,0,1)&
  4\;& (2,2,0,2)&
  4\;& (2,2,0,3)&
  4\;& (2,2,1,2)\\
  4\;& (2,2,2,2)&
  4\;& (3,-2,0,2)&
  4\;& (3,-2,1,2)&
  4\;& (3,-1,-1,0)\\
  4\;& (3,-1,0,0)&
  4\;& (3,-1,1,2)&
  4\;& (3,0,-1,0)&
  4\;& (3,0,-1,1)\\
  4\;& (3,0,0,0)&
  4\;& (3,0,0,1)&
  5\;& (3,0,0,2)&
  4\;& (3,0,0,3)\\
  4\;& (3,0,1,2)&
  4\;& (3,0,2,2)&
  4\;& (3,1,-1,1)&
  4\;& (3,1,-1,2)\\
  4\;& (3,1,0,0)&
  4\;& (3,1,0,1)&
  4\;& (3,1,0,3)&
  4\;& (3,1,1,0)\\
  4\;& (3,1,1,2)&
  4\;& (4,-1,0,2)&
  4\;& (4,-1,1,2)&
  4\;& (4,0,0,2)\\
  4\;& (4,0,1,2)
    \end{alignat*}
\end{center}

\end{document}